\theoremstyle{plain}
\newtheorem{thm}{Theorem}[section]
\newtheorem{corollary}[thm]{Corollary}
\theoremstyle{definition}
\newtheorem{defn}{Definition}
\theoremstyle{remark}
\numberwithin{equation}{section}
\begin{document}
\newcommand{\ici}[1]{\stackrel{\circ}{#1}}
\begin{center}{\bf{LACUNARY ARITHMETIC STATISTICAL CONVERGENCE}}

\vspace{.5cm}
Taja Yaying$^{1}$ and Bipan Hazarika$^{\ast 2}$ \\
$^{1}$Department of Mathematics, Dera Natung Govt. College, Itanagar-791 111, Arunachal Pradesh, India\\
$^{2}$Department of Mathematics, Rajiv Gandhi University, Rono Hills, Doimukh-791 112, Arunachal Pradesh, India\\
Email: tajayaying20@gmail.com;  bh\_rgu@yahoo.co.in
\end{center}
\title{}
\author{}
\thanks{$^\ast$The corresponding author.}
\date{\today}

\begin{abstract} A lacunary sequence is an increasing integer sequence $\theta=(k_r)$ such that $k_r-k_{r-1}\rightarrow \infty$ as $r\rightarrow \infty.$ In this article we introduce arithmetic statistically convergent sequence space $ASC$ and lacunary arithmetic statistically convergent sequence space $ASC_{\theta}$ and study some inclusion properties between the two spaces. Finally we introduce lacunary arithmetic statistical continuity and establish some interesting results.

Key Words:  sequence space; lacunary;  statistical convergence; arithmetic convergence.

AMS Subject Classification No (2010): 40A05, 40A99, 46A70,  46A99.
\end{abstract}
\maketitle
%
\section{Introduction}
A sequence $x=(x_m)$ is called \textit{arithmetically convergent} if for each $\varepsilon > 0$ there is an integer $n$ such that for every integer $m$ we have $\left|x_m- x_{\left\langle m,n\right\rangle}\right|< \varepsilon,$ where the symbol $\left\langle m,n\right\rangle$ denotes the greatest common divisor of two integers $m$ and $n.$ We denote the sequence space of all arithmetic convergent sequence by $AC.$ The idea of arithmetic convergence was introduced by W.H.Ruckle \cite{Ruckle12}. The studies on arithmetic convergence and related results  can be found in \cite{Ruckle12, Tajahazarika, bipan02, tajahazarika3,tajahazacakalli}.\\

By a lacunary sequence we mean an increasing integer sequence $\theta =(k_r)$ such that $h_r=k_r-k_{r-1}\rightarrow \infty$ as $r\rightarrow \infty$. In this paper the intervals determined by $\theta$ will be denoted by $I_{r}=(k_{r-1},k_r]$ and also the ratio $\frac{k_r}{k_{r-1}},r\geq1, k_0\neq 0$ will be denoted by $q_r$. The space of lacunary convergent sequence $N_{\theta}$ was defined by Freedman \cite{FreedmanSemberRaphael} as follows:
\begin{equation*}
N_{\theta}=\left\{x=(x_m)\in w: \lim_{r\rightarrow \infty}\frac{1}{h_r}\sum_{m\in I_r}\left|x_m-l\right|=0 ~\text{for some}~ l\right\}.
\end{equation*}
The space $N_{\theta}$ is a $BK$-space with the norm
\begin{equation*}
\left\|x\right\|_{N_{\theta}}=\sup_r\frac{1}{h_r}\sum_{m\in I_r}\left|x_m\right|.
\end{equation*}
The notion of lacunary convergence has been investigated by \c{C}olak \cite{colak1}, Fridy and Orhan \cite{freedy1,freedy2}, Tripathy and Et \cite{tripathy2} and many others in the recent past.\\

The concept of statistical convergence was introduced by Fast\cite{fast} and later on it was further investigated from the sequence space point of view and linked with summability theory by Fridy \cite{freedy3}, Connor \cite{connor}, Fridy and Orhan \cite{freedy2}, \v{S}al\'at \cite{salat} and many other authors.\\

A sequence $x = (x_m)$ is said to be statistically convergent to the number $L$ if for every $\varepsilon > 0,$
\begin{equation*}
\lim_{t\rightarrow \infty}\frac{1}{t}\left|\left\{m\leq t:\left|x_m-L\right|\geq \varepsilon\right\}\right|=0,
\end{equation*}
where the vertical bars indicate the number of elements in the enclosed set.\\

The main purpose of this paper is to introduce arithmetic statistical convergence and lacunary arithmetic statistical convergence and to study some inclusion properties between these spaces. We also establish some sequential properties of lacunary arithmetic statistical continuity.
\section{Main Results}
A sequence $x=(x_m)$ is said to be arithmetic statistically convergent if for $\varepsilon>0,$ there is an integer $n$ such that
\begin{equation*}
\lim_{t\rightarrow \infty}\frac{1}{t}\left|\left\{m\leq t:\left|x_m-x_{\left\langle m,n\right\rangle}\right|\geq \varepsilon\right\}\right|=0.
\end{equation*} 
We shall use $ASC$ to denote the set of all arithmetic statistical convergent sequences. Thus for $\varepsilon>0$ and integer $n$
\begin{equation*}
ASC=\left\{(x_m):\lim_{t\rightarrow \infty}\frac{1}{t}\left|\left\{m\leq t:\left|x_m-x_{\left\langle m,n\right\rangle}\right|\geq \varepsilon\right\}\right|=0\right\}.
\end{equation*}
We shall write $ASC-\lim x_m=x_{\left\langle m,n\right\rangle}$ to denote the sequence $(x_m)$ is arithmetic statistically convergent to $x_{\left\langle m,n\right\rangle}.$

\begin{thm}
Let $x=(x_m)$ and $y=(y_m)$ be two sequences.
\begin{enumerate}
\item[(i).] If $ASC-\lim x_m=x_{\left\langle m,n\right\rangle}$ and $c\in \mathbb{R},$ then $ASC-\lim cx_m=cx_{\left\langle m,n\right\rangle}.$
\item[(ii).] If $ASC-\lim x_m=x_{\left\langle m,n\right\rangle}$ and $ASC-\lim y_m=y_{\left\langle m,n\right\rangle},$ then $ASC-\lim (x_m+y_m)=(x_{\left\langle m,n\right\rangle}+y_{\left\langle m,n\right\rangle}).$
\end{enumerate}
\end{thm}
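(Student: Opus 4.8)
The plan is to reduce each ASC statement to an ordinary statistical-density estimate, treating the gcd-indexed term $x_{\langle m,n\rangle}$ as a fixed reference against which densities are measured. In both parts I fix $\varepsilon>0$ and analyse the exceptional set $E_t(\varepsilon)=\{m\le t:\cdots\ge\varepsilon\}$ whose normalised cardinality $\frac1t|E_t(\varepsilon)|$ must tend to $0$; the whole argument rests on elementary inequalities for the quantity inside the absolute value together with monotonicity and finite subadditivity of the counting measure $m\mapsto|\{\cdot\}|$.

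For part (i), I would first dispose of the trivial case $c=0$, where $cx_m=cx_{\langle m,n\rangle}=0$ for every $m$ and the exceptional set is empty. For $c\ne0$ the key observation is the exact identity $|cx_m-cx_{\langle m,n\rangle}|=|c|\,|x_m-x_{\langle m,n\rangle}|$, so that $|cx_m-cx_{\langle m,n\rangle}|\ge\varepsilon$ holds if and only if $|x_m-x_{\langle m,n\rangle}|\ge\varepsilon/|c|$. This gives the set equality $\{m\le t:|cx_m-cx_{\langle m,n\rangle}|\ge\varepsilon\}=\{m\le t:|x_m-x_{\langle m,n\rangle}|\ge\varepsilon/|c|\}$, and applying the hypothesis $ASC-\lim x_m=x_{\langle m,n\rangle}$ with the threshold $\varepsilon/|c|$ (and the same $n$) shows the right-hand density vanishes as $t\to\infty$.

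For part (ii), I would use the triangle inequality in the form $|(x_m+y_m)-(x_{\langle m,n\rangle}+y_{\langle m,n\rangle})|\le|x_m-x_{\langle m,n\rangle}|+|y_m-y_{\langle m,n\rangle}|$. Hence if the left side is $\ge\varepsilon$, at least one summand on the right is $\ge\varepsilon/2$, which yields the inclusion $\{m\le t:|(x_m+y_m)-(x_{\langle m,n\rangle}+y_{\langle m,n\rangle})|\ge\varepsilon\}\subseteq\{m\le t:|x_m-x_{\langle m,n\rangle}|\ge\varepsilon/2\}\cup\{m\le t:|y_m-y_{\langle m,n\rangle}|\ge\varepsilon/2\}$. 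By monotonicity and subadditivity of cardinality, $\frac1t$ times the left-hand count is bounded by the sum of the two right-hand densities, each of which tends to $0$ by the two hypotheses applied with threshold $\varepsilon/2$.

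I expect the only genuine subtlety to be the bookkeeping of the integer $n$ in part (ii): the definition supplies, for a given threshold, an integer witnessing the density condition, and the statement is phrased with a common $n$ for both $x$ and $y$, so that $x_{\langle m,n\rangle}$ and $y_{\langle m,n\rangle}$ are taken over the same gcd index. I would make this explicit, noting that the triangle-inequality step is legitimate precisely because the two reference terms share the index $\langle m,n\rangle$; were the witnessing integers for $x$ and $y$ different, the sum $x_{\langle m,n\rangle}+y_{\langle m,n\rangle}$ would not be the natural limit and the clean inclusion above would break. Everything else is routine manipulation of finite sets.
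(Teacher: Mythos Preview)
Your proposal is correct and follows exactly the paper's argument: for (i) the trivial case $c=0$ and then the set equality via $|cx_m-cx_{\langle m,n\rangle}|=|c|\,|x_m-x_{\langle m,n\rangle}|$, and for (ii) the triangle-inequality inclusion together with subadditivity of cardinality at threshold $\varepsilon/2$. Your added remark on the shared witness $n$ is a reasonable clarification that the paper leaves implicit.
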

\begin{proof}
(i). The result is obvious when $c=0$. Suppose $c\neq 0,$ then for integer $n$
\begin{equation*}
\frac{1}{t}\left|\left\{m\leq t:\left|cx_m-cx_{\left\langle m,n\right\rangle}\right|\geq \varepsilon\right\}\right|=\frac{1}{t}\left|\left\{m\leq t:\left|x_m-x_{\left\langle m,n\right\rangle}\right|\geq \frac{\varepsilon}{\left|c\right|}\right\}\right|
\end{equation*}
which yields the result.\\
The result of (ii) follows from
\begin{align*}
&\frac{1}{t}\left|\left\{m\leq t:\left|(x_m+y_m)-(x_{\left\langle m,n\right\rangle}+y_{\left\langle m,n\right\rangle})\right|\geq \varepsilon\right\}\right|\\
&\leq \frac{1}{t}\left|\left\{m\leq t:\left|x_m-x_{\left\langle m,n\right\rangle}\right|\geq \frac{\varepsilon}{2}\right\}\right|
+ \frac{1}{t}\left|\left\{m\leq t:\left|y_m-y_{\left\langle m,n\right\rangle}\right|\geq \frac{\varepsilon}{2}\right\}\right|.
\end{align*}
\end{proof}
Now we define a related concept of convergence in which the set $\left\{m: m \leq t\right\}$ is replaced by the set $\left\{m: k_{r-1} \leq m \leq k_r\right\},$ for some lacunary sequence $(k_r).$ 
\begin{defn}
Let $\theta=(k_r)$ be a lacunary sequence. The number sequence $x=(x_m)$ is said to be lacunary arithmetic statistically convergent if for each $\varepsilon>0$ there is an integer $n$ such that
\begin{equation*}
\lim_{r\rightarrow \infty}\frac{1}{h_r}\left|\left\{m\in I_r:\left|x_m-x_{\left\langle m,n\right\rangle}\right|\geq \varepsilon\right\}\right|=0.
\end{equation*}
\end{defn}
We shall write
\begin{equation*}
ASC_\theta=\left\{x=(x_m):\lim_{r\rightarrow \infty}\frac{1}{h_r}\left|\left\{m\in I_r:\left|x_m-x_{\left\langle m,n\right\rangle}\right|\geq \varepsilon\right\}\right|=0\right\}.
\end{equation*}
We shall use $ASC_\theta-\lim x_m=x_{\left\langle m,n\right\rangle}$ to denote the sequence $(x_m)$ is lacunary arithmetic statistically convergent to $x_{\left\langle m,n\right\rangle}.$

\begin{thm}
Let $x=(x_m)$ and $y=(y_m)$ be two sequences.
\begin{enumerate}
\item[(i).] If $ASC_\theta-\lim x_m=x_{\left\langle m,n\right\rangle}$ and $c\in \mathbb{R},$ then $ASC_\theta-\lim cx_m=cx_{\left\langle m,n\right\rangle}.$
\item[(ii).] If $ASC_\theta-\lim x_m=x_{\left\langle m,n\right\rangle}$ and $ASC_\theta-\lim y_m=y_{\left\langle m,n\right\rangle},$ then $ASC_\theta-\lim (x_m+y_m)=(x_{\left\langle m,n\right\rangle}+y_{\left\langle m,n\right\rangle}).$
\end{enumerate}
\end{thm}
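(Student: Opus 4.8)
The plan is to follow verbatim the structure of the proof of the corresponding statement for $ASC$, with the counting window $\{m\leq t\}$ replaced by the lacunary block $I_r=(k_{r-1},k_r]$, the normalization $1/t$ replaced by $1/h_r$, and the limit $t\to\infty$ replaced by $r\to\infty$. Conceptually nothing new is needed: only the indexing set over which the densities are computed changes, and the two algebraic facts driving the argument (exact invariance of the exceptional set under nonzero scaling, and subadditivity of the counting function under the triangle inequality) are insensitive to that change.

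For part (i), I would first dispose of the trivial case $c=0$. For $c\neq 0$, the decisive point is that scaling by $c$ does not merely bound one exceptional set by another but reproduces it exactly: since $|cx_m-cx_{\langle m,n\rangle}|=|c|\,|x_m-x_{\langle m,n\rangle}|$, for the integer $n$ furnished by the hypothesis one has
\[
\left\{m\in I_r:\left|cx_m-cx_{\langle m,n\rangle}\right|\geq\varepsilon\right\}=\left\{m\in I_r:\left|x_m-x_{\langle m,n\rangle}\right|\geq\frac{\varepsilon}{|c|}\right\}.
\]
Dividing by $h_r$ and letting $r\to\infty$, the right-hand density vanishes upon applying the hypothesis with $\varepsilon/|c|$ in place of $\varepsilon$, which gives the claim.

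For part (ii), the tool is the triangle inequality together with the subadditivity of the counting function. If $m\in I_r$ makes $\left|(x_m+y_m)-(x_{\langle m,n\rangle}+y_{\langle m,n\rangle})\right|\geq\varepsilon$, then at least one of $|x_m-x_{\langle m,n\rangle}|\geq\varepsilon/2$ or $|y_m-y_{\langle m,n\rangle}|\geq\varepsilon/2$ must hold, so the exceptional block for $x+y$ is contained in the union of the exceptional blocks for $x$ and for $y$ at level $\varepsilon/2$. Counting and dividing by $h_r$ then yields
\begin{align*}
&\frac{1}{h_r}\left|\left\{m\in I_r:\left|(x_m+y_m)-(x_{\langle m,n\rangle}+y_{\langle m,n\rangle})\right|\geq\varepsilon\right\}\right|\\
&\qquad\leq\frac{1}{h_r}\left|\left\{m\in I_r:\left|x_m-x_{\langle m,n\rangle}\right|\geq\frac{\varepsilon}{2}\right\}\right|+\frac{1}{h_r}\left|\left\{m\in I_r:\left|y_m-y_{\langle m,n\rangle}\right|\geq\frac{\varepsilon}{2}\right\}\right|,
\end{align*}
and both terms on the right tend to $0$ by hypothesis as $r\to\infty$.

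The one point I would watch in part (ii) is the choice of the integer $n$: the definition supplies an $n_1$ witnessing the convergence of $(x_m)$ and a possibly different $n_2$ for $(y_m)$, whereas the conclusion is phrased with a single $n$. I expect this to be the only genuine obstacle, and the cleanest resolution is to follow the convention already adopted in the proof for $ASC$, fixing one $n$ throughout, after which the displayed inequality passes through unchanged; alternatively one would have to argue that a common witness $n$ can be selected for the two sequences.
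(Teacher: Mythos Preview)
Your proposal is correct and matches the paper's proof essentially line for line: the paper too dispatches $c=0$ trivially, writes the exact equality of counting densities for $c\neq 0$, and for the sum uses the same $\varepsilon/2$ splitting via the triangle inequality. Your caveat about the choice of a common integer $n$ in part (ii) is a fair observation, but the paper simply fixes one $n$ throughout without further comment, just as in the $ASC$ case.
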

\begin{proof}
(i). The result is obvious when $c=0$. Suppose $c\neq 0,$ then for integer $n$
\begin{equation*}
\frac{1}{h_r}\left|\left\{m\in I_r:\left|cx_m-cx_{\left\langle m,n\right\rangle}\right|\geq \varepsilon\right\}\right|=\frac{1}{h_r}\left|\left\{m\in I_r:\left|x_m-x_{\left\langle m,n\right\rangle}\right|\geq \frac{\varepsilon}{\left|c\right|}\right\}\right|
\end{equation*}
which yields the result.\\
The result of (ii) follows from
\begin{align*}
&\frac{1}{h_r}\left|\left\{m\in I_r:\left|(x_m+y_m)-(x_{\left\langle m,n\right\rangle}+y_{\left\langle m,n\right\rangle})\right|\geq \varepsilon\right\}\right|\\
&\leq \frac{1}{h_r}\left|\left\{m\in I_r:\left|x_m-x_{\left\langle m,n\right\rangle}\right|\geq \frac{\varepsilon}{2}\right\}\right|
+ \frac{1}{h_r}\left|\left\{m\in I_r:\left|y_m-y_{\left\langle m,n\right\rangle}\right|\geq \frac{\varepsilon}{2}\right\}\right|.
\end{align*}
\end{proof}
\begin{defn}\cite{FreedmanSemberRaphael}
Let $\theta=(k_r)$ be a lacunary sequence. A lacunary refinement of $\theta$ is a lacunary sequence $\theta'=(k'_r)$ satisfying $(k_r)\subseteq (k'_r)$.
\end{defn}
\begin{thm}
\label{fridythm7}
If $\theta'=(k'_r)$ is a lacunary refinement of a lacunary sequence $\theta=(k_r)$ and $(x_m)\in ASC_{\theta'}$ then $(x_m)\in ASC_{\theta}.$ 
\end{thm}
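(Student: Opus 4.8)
The plan is to exploit the defining feature of a refinement: since $(k_r)\subseteq(k'_r)$, each interval $I_r=(k_{r-1},k_r]$ of $\theta$ is split by the extra points of $\theta'$ into finitely many consecutive intervals of $\theta'$. Writing $J_r$ for the finite set of indices $s$ with $I'_s\subseteq I_r$, I would first record the two decompositions
\begin{equation*}
\{m\in I_r:|x_m-x_{\langle m,n\rangle}|\geq\varepsilon\}=\bigsqcup_{s\in J_r}\{m\in I'_s:|x_m-x_{\langle m,n\rangle}|\geq\varepsilon\},\qquad h_r=\sum_{s\in J_r}h'_s,
\end{equation*}
which hold precisely because the endpoints $k_{r-1},k_r$ themselves belong to $\theta'$, so the blocks $I'_s$ tile $I_r$ exactly.

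Next I would fix $\varepsilon>0$ and invoke the hypothesis $(x_m)\in ASC_{\theta'}$ to produce an integer $n$ with $a_s/h'_s\to 0$ as $s\to\infty$, where $a_s:=|\{m\in I'_s:|x_m-x_{\langle m,n\rangle}|\geq\varepsilon\}|$. This same $n$ is what I will use to verify $(x_m)\in ASC_\theta$, so no fresh choice of $n$ is needed; the definition only asks that for each $\varepsilon$ some $n$ exist, and I simply transfer the one supplied by $\theta'$.

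The crux is then a weighted-average estimate. I would first note that $\min J_r\to\infty$ as $r\to\infty$: the smallest index in $J_r$ is the one whose left endpoint equals $k_{r-1}$, and $k_{r-1}\to\infty$ forces that index to infinity. Given $\eta>0$, choose $S$ so that $a_s<\eta h'_s$ for all $s\geq S$, and then take $r$ large enough that $\min J_r\geq S$. Summing over $s\in J_r$ gives $\sum_{s\in J_r}a_s<\eta\sum_{s\in J_r}h'_s=\eta h_r$, whence
\begin{equation*}
\frac{1}{h_r}\left|\{m\in I_r:|x_m-x_{\langle m,n\rangle}|\geq\varepsilon\}\right|=\frac{1}{h_r}\sum_{s\in J_r}a_s<\eta.
\end{equation*}
Since $\eta>0$ is arbitrary this yields the required limit, and hence $(x_m)\in ASC_\theta$.

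I expect the main obstacle to be exactly this weighted-average step: a termwise bound $a_s/h'_s\to 0$ does not by itself control the ratio of block sums $\big(\sum_{s\in J_r}a_s\big)\big/\big(\sum_{s\in J_r}h'_s\big)$ unless one knows that every index being summed already lies beyond the threshold $S$. The role played by $\min J_r\to\infty$—a consequence of the refinement structure together with $k_{r-1}\to\infty$—is precisely what licenses the uniform bound $a_s<\eta h'_s$ across the entire block $J_r$, and this is the point I would argue carefully rather than treat as routine.
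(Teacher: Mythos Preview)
Your argument is correct and follows essentially the same route as the paper: decompose each $I_r$ into the consecutive $\theta'$-intervals it contains, write the $I_r$-count as a weighted average of the $\theta'$-counts with weights $h'_s/h_r$, and use $(x_m)\in ASC_{\theta'}$ to drive this average to zero. Your treatment is in fact more careful than the paper's, which writes the same weighted-average identity but asserts the limit without isolating the point you emphasize, namely that $\min J_r\to\infty$ is what allows the uniform bound $a_s<\eta h'_s$ across the whole block.
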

\begin{proof}  
Suppose for each $I_r$ of $\theta$ contains the point $(k'_{r,t})_{t=1}^{\eta(r)}$ of $\theta'$ such that 
\begin{equation*}
k_{r-1}<k'_{r,1}<k'_{r,2}<\ldots <k'_{\eta,\eta(r)}=k_r
\end{equation*} 
where $I'_{r,t}=\left(k'_{r,t-1},k'_{r,t}\right].$\\
Since $(k_r)\subseteq (k'_r),$, so $\forall r,\eta(r)\geq1.$\\
Let $(I^*)_{j=1}^\infty$ be the sequence of interval $(I^*_{r,t})$ ordered by increasing right end points. Since $(x_m)\in ASC_{\theta'},$ then for each $\varepsilon>0$ and an integer $n$
\begin{equation*}
\lim \sum_{I_j^*\subset I_r}\frac{1}{h_j^*}\left|\left\{m\in I_j^*:\left|x_m-x_{\left\langle m,n\right\rangle}\right|\geq \varepsilon\right\}\right|=0.
\end{equation*}  
Also since $h_r=k_r-k_{r-1},$ so $h'_{r,t}=k'_{r,t}-k'_{r,t-1}.$\\
For each $\varepsilon>0$ and integer $n$
\begin{eqnarray*}
\frac{1}{h_r}\left|\left\{m\in I_r:\left|x_m-x_{\left\langle m,n\right\rangle}\right|\geq \varepsilon\right\}\right| &=& \frac{1}{h_r}\sum_{I_j^*\subset I_r}h_j^*\frac{1}{h_j^*}\left|\left\{m\in I_j^*:\left|x_m-x_{\left\langle m,n\right\rangle}\right|\geq \varepsilon\right\}\right|\\
&\rightarrow& 0 ~\text{as $r\rightarrow \infty$}.
\end{eqnarray*}
This implies $(x_m)\in ASC_{\theta}.$
\end{proof}

\begin{thm}
\label{junli}
Suppose $\beta=(l_r)$ is a lacunary refinement of a lacunary sequence $\theta=(k_r).$ Let $I_r=\left(k_{r-1},k_r\right]$ and $J_r=\left(l_{r-1},l_r\right], r=1, 2, \ldots $ If there exists $\delta>0$ such that
\begin{equation*}
\frac{\left|J_j\right|}{\left|I_i\right|}\geq \delta~ \text{for every $J_j\subseteq I_i.$}
\end{equation*}
Then $(x_m)\in ASC_\theta \Rightarrow (x_m)\in ASC_{\beta}.$
\end{thm}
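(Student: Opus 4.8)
The plan is to exploit the fact that, since $\beta=(l_r)$ refines $\theta=(k_r)$, every block $J_j$ of $\beta$ sits inside a unique block $I_{i(j)}$ of $\theta$, and then to transfer the density estimate available for $\theta$ to $\beta$ at the cost of only the bounded factor $1/\delta$. Throughout I would keep the integer $n$ fixed: given $\varepsilon>0$, since $(x_m)\in ASC_\theta$ I first fix an integer $n$ for which
\[
\lim_{i\rightarrow\infty}\frac{1}{|I_i|}\left|\left\{m\in I_i:\left|x_m-x_{\langle m,n\rangle}\right|\geq\varepsilon\right\}\right|=0,
\]
and I claim this same $n$ witnesses lacunary arithmetic statistical convergence along $\beta$. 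The first step is to set up the index correspondence $j\mapsto i(j)$, where $I_{i(j)}$ is the unique $\theta$-interval with $J_j\subseteq I_{i(j)}$; this is well defined because the endpoints of the $I_i$ occur among the endpoints of the $J_j$, so no $J_j$ can straddle a point $k_i$.

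The elementary but crucial observation is the set inclusion
\[
\left\{m\in J_j:\left|x_m-x_{\langle m,n\rangle}\right|\geq\varepsilon\right\}\subseteq\left\{m\in I_{i(j)}:\left|x_m-x_{\langle m,n\rangle}\right|\geq\varepsilon\right\},
\]
so the count over $J_j$ never exceeds the count over $I_{i(j)}$. Combining this with the hypothesis $|J_j|/|I_{i(j)}|\geq\delta$, equivalently $1/|J_j|\leq 1/(\delta|I_{i(j)}|)$, yields the key inequality
\[
\frac{1}{|J_j|}\left|\left\{m\in J_j:\left|x_m-x_{\langle m,n\rangle}\right|\geq\varepsilon\right\}\right|\leq\frac{1}{\delta}\cdot\frac{1}{|I_{i(j)}|}\left|\left\{m\in I_{i(j)}:\left|x_m-x_{\langle m,n\rangle}\right|\geq\varepsilon\right\}\right|.
\]

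Finally, I would observe that $i(j)\rightarrow\infty$ as $j\rightarrow\infty$: each $I_i$ is a finite interval partitioned into only finitely many blocks $J_j$, so letting $j\rightarrow\infty$ forces the containing index to diverge. Hence the right-hand side above tends to $0$ by the choice of $n$, and therefore so does the left-hand side, which is precisely the statement $(x_m)\in ASC_\beta$. The step I expect to require the most care is this justification that $i(j)\rightarrow\infty$, together with the bookkeeping that each $J_j$ lies in a single $I_i$; by contrast the density hypothesis enters only through the harmless constant $1/\delta$, so the real content is the monotone comparison of densities across a refinement rather than any delicate estimate.
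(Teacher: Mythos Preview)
Your argument is correct and follows essentially the same route as the paper: both find, for each $J_j$, the containing interval $I_i$, bound the $J_j$-density by $\dfrac{1}{\delta}$ times the $I_i$-density via the obvious set inclusion and the hypothesis $|J_j|/|I_i|\ge\delta$, and then let the right-hand side tend to $0$. In fact you supply two details the paper leaves implicit, namely why each $J_j$ lies in a single $I_i$ and why $i(j)\to\infty$ as $j\to\infty$.
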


\begin{proof}
For any $\varepsilon>0$ and integer $n$ and every $J_j,$ we can find $I_i$ such that $J_j\subseteq I_i,$ then we have
\begin{align*}
&\left(\frac{1}{\left|J_j\right|}\right) \left|\left\{m\in J_j:\left|x_m-x_{\left\langle m,n\right\rangle}\right|\geq \varepsilon\right\}\right|\\
&= \left(\frac{\left|I_i\right|}{\left|J_j\right|}\right)\left(\frac{1}{\left|I_i\right|}\right) \left|\left\{m\in J_j:\left|x_m-x_{\left\langle m,n\right\rangle}\right|\geq \varepsilon\right\}\right| \\
&\leq \left(\frac{\left|I_i\right|}{\left|J_j\right|}\right)\left(\frac{1}{\left|I_i\right|}\right) \left|\left\{m\in I_i:\left|x_m-x_{\left\langle m,n\right\rangle}\right|\geq \varepsilon\right\}\right| \\
&\leq \frac{1}{\delta}\left(\frac{1}{\left|I_i\right|}\right) \left|\left\{m\in I_i:\left|x_m-x_{\left\langle m,n\right\rangle}\right|\geq \varepsilon\right\}\right|,
\end{align*}
which yields the result.
\end{proof}
\begin{thm}
Suppose $\beta=(l_r)$ and $\theta=(k_r)$ are two lacunary sequences. Let $I_r=\left(k_{r-1},k_r\right], ~J_r=\left(l_{r-1},l_r\right], r=1, 2, \ldots $ and $I_{ij}=I_i\cap J_j,~ i,j=1,2,3\ldots$ If there exists $\delta>0$ such that
\begin{equation*}
\frac{\left|I_{ij}\right|}{\left|I_i\right|}\geq \delta~ \mbox{for~every~}i,j=1,2,3,\dots, ~I_{ij}\neq \phi.
\end{equation*}
Then $(x_m)\in ASC_\theta \Rightarrow (x_m)\in ASC_{\beta}.$
\end{thm}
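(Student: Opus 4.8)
The plan is to adapt the averaging argument of Theorem \ref{junli}, now exploiting that each $J_j$ is partitioned by its nonempty slices $I_{ij}=I_i\cap J_j$. Fix $\varepsilon>0$, let $n$ be the integer furnished by the assumption $(x_m)\in ASC_\theta$, and abbreviate $N_i=\left|\left\{m\in I_i:\left|x_m-x_{\left\langle m,n\right\rangle}\right|\geq\varepsilon\right\}\right|$ together with $a_i=N_i/\left|I_i\right|$; the hypothesis says precisely that $a_i\to0$ as $i\to\infty$. Since the nonempty $I_{ij}$ (over $i$) partition $J_j$, the exceptional set inside $J_j$ splits as a disjoint union, giving
\begin{equation*}
\frac{1}{\left|J_j\right|}\left|\left\{m\in J_j:\left|x_m-x_{\left\langle m,n\right\rangle}\right|\geq\varepsilon\right\}\right|=\frac{1}{\left|J_j\right|}\sum_{I_{ij}\neq\phi}\left|\left\{m\in I_{ij}:\left|x_m-x_{\left\langle m,n\right\rangle}\right|\geq\varepsilon\right\}\right|.
\end{equation*}

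I would then bound each slice by the whole interval, $\left|\left\{m\in I_{ij}:\ldots\right\}\right|\leq N_i=a_i\left|I_i\right|$, and convert $\left|I_i\right|$ back into $\left|I_{ij}\right|$ through the standing hypothesis $\left|I_i\right|\leq\delta^{-1}\left|I_{ij}\right|$. Because $\sum_{I_{ij}\neq\phi}\left|I_{ij}\right|=\left|J_j\right|$, this exhibits the right-hand side as $\delta^{-1}$ times a genuine weighted average of the numbers $a_i$:
\begin{equation*}
\frac{1}{\left|J_j\right|}\left|\left\{m\in J_j:\left|x_m-x_{\left\langle m,n\right\rangle}\right|\geq\varepsilon\right\}\right|\leq\frac{1}{\delta\left|J_j\right|}\sum_{I_{ij}\neq\phi}a_i\left|I_{ij}\right|.
\end{equation*}

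The crux, and the step I expect to need the most care, is the localization claim that every index $i$ entering the $j$-th sum grows with $j$: if $I_{ij}\neq\phi$ then $I_i$ meets $J_j=\left(l_{j-1},l_j\right]$, which forces $k_i>l_{j-1}$, and since $(k_i)$ increases to $\infty$ while $l_{j-1}\to\infty$, the least index $i$ occurring in the sum tends to $\infty$ as $j\to\infty$. This is exactly where both lacunary hypotheses are used, and without it the weighted average cannot be controlled, since a bounded block of possibly large early values $a_i$ would otherwise survive. Granting the claim, the conclusion is immediate: given $\eta>0$, choose $i_0$ with $a_i<\delta\eta$ for all $i\geq i_0$, then choose $J$ so large that for every $j\geq J$ all indices $i$ with $I_{ij}\neq\phi$ satisfy $i\geq i_0$; for such $j$ the last display is bounded by $\frac{1}{\delta\left|J_j\right|}\cdot\delta\eta\sum_{I_{ij}\neq\phi}\left|I_{ij}\right|=\eta$. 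As $\eta>0$ is arbitrary the limit in $j$ is $0$, which is exactly $(x_m)\in ASC_\beta$.
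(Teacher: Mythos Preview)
Your argument is correct, but the paper takes a different, more modular route. It introduces the common refinement $\alpha=\beta\cup\theta$, whose interval sequence is precisely the family of nonempty $I_{ij}$. Since $\alpha$ refines $\theta$ and the hypothesis $\left|I_{ij}\right|/\left|I_i\right|\geq\delta$ is exactly the condition in Theorem~\ref{junli}, that theorem gives $ASC_\theta\subseteq ASC_\alpha$; and since $\alpha$ also refines $\beta$, Theorem~\ref{fridythm7} then gives $ASC_\alpha\subseteq ASC_\beta$. Your direct estimate is essentially both applications unrolled into one computation: the bound $\left|\{m\in I_{ij}:\ldots\}\right|\leq\delta^{-1}a_i\left|I_{ij}\right|$ is the content of Theorem~\ref{junli} (passing from $I_i$ to the slice $I_{ij}$), and the weighted-average step over the slices of $J_j$ together with the localization claim is what Theorem~\ref{fridythm7} accomplishes (passing from the refinement back up to $\beta$). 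The paper's approach is shorter and exposes the two-step structure cleanly; yours is self-contained, avoids naming the auxiliary sequence $\alpha$, and makes explicit exactly where each lacunary hypothesis is used.
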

\begin{proof}
Let $\alpha=\beta\cup \theta.$ Then $\alpha$ is a lacunary refinement of $\theta.$ The interval sequence of $\alpha$ is $\left\{I_{ij}=I_i\cap I_j: I_{ij}\neq \phi\right\}.$ Using theorem \ref{junli} and the condition $\frac{\left|I_{ij}\right|}{\left|I_i\right|}\geq \delta$ yields that $(x_m)\in ASC_\theta \Rightarrow (x_m)\in ASC_{\alpha}.$ Since $\alpha$ is a lacunary refinement of the lacunary sequence $\beta,$ from theorem \ref{fridythm7}, we have $(x_m)\in ASC_\alpha \Rightarrow (x_m)\in ASC_{\beta}.$   
\end{proof}

There is a strong connection between the sequence space $ASC$ and $ASC_\theta.$ Now we give some inclusion relations between the spaces $ASC$ and $ASC_\theta.$
\begin{thm}
\label{lac1}
Let $\theta=(k_r), ~r=1,2,3,\ldots$ be a lacunary sequence. If $\liminf q_r>1,$ then $ASC\subseteq ASC_\theta.$
\end{thm}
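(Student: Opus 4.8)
The plan is to control the lacunary block average over $I_r=(k_{r-1},k_r]$ by the ordinary average over the initial segment $\{m\le k_r\}$, and then to use the hypothesis $\liminf q_r>1$ to absorb the resulting ratio $k_r/h_r$ into a fixed constant. First I would fix an arbitrary $(x_m)\in ASC$, so that there is an integer $n$ for which $\frac{1}{t}\left|\left\{m\le t:\left|x_m-x_{\langle m,n\rangle}\right|\ge\varepsilon\right\}\right|\to 0$ as $t\to\infty$, and I will establish lacunary arithmetic statistical convergence using the \emph{same} integer $n$. From $\liminf q_r>1$ I would extract a $\delta>0$ and an index $r_0$ with $q_r\ge 1+\delta$ for all $r\ge r_0$; since $q_r=k_r/k_{r-1}$ this gives $h_r\ge\delta k_{r-1}$ and hence $\frac{k_r}{h_r}=\frac{q_r}{q_r-1}\le\frac{1+\delta}{\delta}=:M$ for $r\ge r_0$. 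This boundedness of $k_r/h_r$ is the single quantitative fact contributed by the $\liminf$ hypothesis, and verifying that it is the correct quantity to bound is really the crux of the argument.

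The main estimate would then proceed by observing that every $m\in I_r$ satisfies $m\le k_r$, so the set counted in the lacunary average is contained in the set counted by the ordinary density at $t=k_r$; writing $\frac{1}{h_r}=\frac{k_r}{h_r}\cdot\frac{1}{k_r}$ gives, for $r\ge r_0$,
\begin{align*}
\frac{1}{h_r}\left|\left\{m\in I_r:\left|x_m-x_{\langle m,n\rangle}\right|\ge\varepsilon\right\}\right|
&\le\frac{k_r}{h_r}\cdot\frac{1}{k_r}\left|\left\{m\le k_r:\left|x_m-x_{\langle m,n\rangle}\right|\ge\varepsilon\right\}\right|\\
&\le M\cdot\frac{1}{k_r}\left|\left\{m\le k_r:\left|x_m-x_{\langle m,n\rangle}\right|\ge\varepsilon\right\}\right|.
\end{align*}
Because $k_r\to\infty$, the right-hand side is $M$ times the value of the ordinary density along the subsequence $t=k_r$, which tends to $0$ by $(x_m)\in ASC$; multiplication by the fixed constant $M$ preserves the limit, so the left-hand side tends to $0$ as $r\to\infty$. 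This yields $(x_m)\in ASC_\theta$ and completes the inclusion.

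I do not anticipate a serious obstacle here, as this is the arithmetic-gcd analogue of the classical Fridy--Orhan passage from statistical to lacunary statistical convergence; the only point requiring genuine care is bookkeeping the ratio $k_r/h_r$ and confirming that $\liminf q_r>1$ is exactly the condition making it bounded, which is what legitimizes dominating the block count by the initial-segment count.
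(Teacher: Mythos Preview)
Your proof is correct and follows essentially the same route as the paper: both extract from $\liminf q_r>1$ a bound of the form $h_r/k_r\ge \sigma/(1+\sigma)$ (equivalently your $k_r/h_r\le M=(1+\delta)/\delta$) and then dominate the lacunary count over $I_r$ by the initial-segment count up to $k_r$, so that arithmetic statistical convergence along $t=k_r$ forces the lacunary average to zero. The only cosmetic difference is that the paper writes the key inequality as a lower bound for $\tfrac{1}{k_r}|\{m\le k_r:\dots\}|$ while you write it as an upper bound for $\tfrac{1}{h_r}|\{m\in I_r:\dots\}|$, which is the same estimate read in the other direction.
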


\begin{proof}
Let $(x_m)\in ASC$ and $\liminf q_r > 1.$ Then there exists $\sigma>0$ such that $q_r=\frac{k_r}{k_{r-1}} \geq 1+\sigma$ for sufficiently large $r,$ which implies that
\begin{equation*}
\frac{h_r}{k_r}\geq \frac{\sigma}{1+\sigma}. 
\end{equation*}
Then, for sufficiently large $r$ and integer $n,$
\begin{eqnarray*}
\frac{1}{k_r} \left|\left\{m\leq k_r:\left|x_m-x_{\left\langle m,n\right\rangle}\right|\geq \varepsilon\right\}\right| &\geq & \frac{1}{k_r}\left|\left\{m\in I_r:\left|x_m-x_{\left\langle m,n\right\rangle}\right|\geq \varepsilon\right\}\right| \\
&\geq& \frac{\sigma}{1+\sigma}\frac{1}{h_r}\left|\left\{m\in I_r:\left|x_m-x_{\left\langle m,n\right\rangle}\right|\geq \varepsilon\right\}\right| \\
\end{eqnarray*}
Thus $x=(x_m)\in ASC \Rightarrow (x_m)\in ASC_{\theta}.$
\end{proof}

\begin{thm}
\label{lac2}
For $\limsup q_r< \infty$, we have $ASC_{\theta}\subseteq ASC.$
\end{thm}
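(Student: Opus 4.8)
The plan is to mirror the structure of Theorem \ref{lac1} but run the estimate in the opposite direction, exploiting the upper bound on the ratios $q_r$ to compare an ordinary density over $\{m\le t\}$ with the lacunary densities over the blocks $I_r$. Since $\limsup_r q_r<\infty$, I would first fix a constant $M>1$ and an index $r_0$ such that $q_r=\frac{k_r}{k_{r-1}}<M$ for all $r\ge r_0$. Now take $(x_m)\in ASC_\theta$; then there is an integer $n$ so that, writing
\begin{equation*}
N_r=\left|\left\{m\in I_r:\left|x_m-x_{\left\langle m,n\right\rangle}\right|\ge \varepsilon\right\}\right|,
\end{equation*}
we have $\frac{N_r}{h_r}\to 0$ as $r\to\infty$. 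In particular $\left(\frac{N_r}{h_r}\right)$ is a bounded null sequence, so given any $\epsilon'>0$ I can choose $R\ge r_0$ with $\frac{N_r}{h_r}<\epsilon'$ for all $r>R$.

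The core of the argument is a block-counting estimate. For an arbitrary $t$, choose $r$ with $k_{r-1}<t\le k_r$. Since every index $m\le t$ lies in one of $I_1,\dots,I_r$, the numerator of the ordinary density is dominated by the sum of the block counts, and as $t>k_{r-1}$ I would bound
\begin{equation*}
\frac{1}{t}\left|\left\{m\le t:\left|x_m-x_{\left\langle m,n\right\rangle}\right|\ge \varepsilon\right\}\right|\le \frac{1}{k_{r-1}}\sum_{i=1}^{r}N_i.
\end{equation*}
Splitting the sum at $R$ gives a head $\frac{1}{k_{r-1}}\sum_{i=1}^{R}N_i$ and a tail $\frac{1}{k_{r-1}}\sum_{i=R+1}^{r}N_i$. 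The head has a fixed numerator and $k_{r-1}\to\infty$, so it tends to $0$ as $r\to\infty$. For the tail I would use $N_i<\epsilon' h_i$ for $i>R$ together with $\sum_{i=1}^{r}h_i=k_r$ to get $\sum_{i=R+1}^{r}N_i\le \epsilon'\, k_r$, and then invoke the upper bound on the ratio:
\begin{equation*}
\frac{1}{k_{r-1}}\sum_{i=R+1}^{r}N_i\le \epsilon'\,\frac{k_r}{k_{r-1}}=\epsilon'\,q_r<\epsilon' M.
\end{equation*}

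Combining the two pieces yields $\limsup_{t\to\infty}\frac{1}{t}\left|\left\{m\le t:\left|x_m-x_{\left\langle m,n\right\rangle}\right|\ge \varepsilon\right\}\right|\le \epsilon' M$, and since $\epsilon'>0$ was arbitrary the $\limsup$ is $0$, giving $(x_m)\in ASC$. I expect the main obstacle to be the tail estimate: one must be careful that the comparison $t>k_{r-1}$ is what legitimizes replacing $\frac1t$ by $\frac{1}{k_{r-1}}$, and that the boundedness of $\left(\frac{N_r}{h_r}\right)$ (needed so the head is genuinely finite for each fixed $R$) is correctly extracted from the hypothesis $(x_m)\in ASC_\theta$. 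The role of $\limsup q_r<\infty$ is exactly to keep the factor $q_r$ in the tail under control; without it the tail could blow up, which is precisely why this bound on the ratios is the indispensable hypothesis.
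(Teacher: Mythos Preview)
Your argument is correct and follows essentially the same route as the paper: define the block counts $N_r$ (the paper's $\tau_r$), locate $t$ in some $I_r$, replace $\tfrac1t$ by $\tfrac{1}{k_{r-1}}$, split the resulting sum at an index $R$ beyond which $N_i/h_i<\epsilon'$, and control the tail via $q_r<M$. Your separation of the density threshold $\varepsilon$ from the auxiliary $\epsilon'$ is in fact cleaner than the paper's overloading of $\varepsilon$; the only superfluous remark is the appeal to boundedness of $(N_r/h_r)$ for finiteness of the head, since each $N_i\le h_i$ is already finite.
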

\begin{proof}
Let $\limsup q_r< \infty$ then there exists $K>0$ such that $q_r< K$ for every $r.$ Let $\tau_r=\left|\left\{m\in I_r:\left|x_m-x_{\left\langle m,n\right\rangle}\right|\geq \varepsilon\right\}\right|$ where $n$ is an integer. Now for $\varepsilon>0$ and $(x_m)\in ASC_{\theta}$ there exists $R$ such that
\begin{equation*}
\frac{\tau_r}{h_r}< \varepsilon \mbox{~for~every~} r\geq R.
\end{equation*}
Let $M=\max\left\{\tau_r: 1\leq r\leq R\right\}$ and let $t$ be any integer with $k_r\geq t\geq k_{r-1}.$ Then for an integer $n$
\begin{eqnarray*}
\frac{1}{t} \left|\left\{m\leq t:\left|x_m-x_{\left\langle m,n\right\rangle}\right|\geq \varepsilon\right\}\right| &\leq & \frac{1}{k_{r-1}}\left|\left\{m\leq k_r:\left|x_m-x_{\left\langle m,n\right\rangle}\right|\geq \varepsilon\right\}\right|\\
&=& \frac{1}{k_{r-1}}\left\{\tau_1+\tau_2+\ldots+\tau_R+\tau_{R+1}+\ldots+\tau_r\right\}\\
&\leq & \frac{MR}{k_{r-1}}+\frac{1}{k_{r-1}}\left\{h_{R+1}\frac{\tau_{R+1}}{h_{R+1}}+\ldots+h_r\frac{\tau_r}{h_r}\right\}\\
&\leq & \frac{MR}{k_{r-1}}+\frac{1}{k_{r-1}}\left(\sup_{r>R}\frac{\tau_r}{h_r}\right)\left\{h_{R+1}+\ldots +h_r\right\}\\
&\leq & \frac{MR}{k_{r-1}}+\varepsilon\frac{k_r-k_R}{k_{r-1}} \\
&\leq & \frac{MR}{k_{r-1}}T+\varepsilon q_r\\
&< & \frac{MR}{k_{r-1}}T+\varepsilon K,
\end{eqnarray*}
which yields $(x_m)\in ASC.$
\end{proof}

\begin{corollary}
From Theorem \ref{lac1} and Theorem \ref{lac2}, if $\theta=(k_r)$ be a lacunary sequence and if
\begin{equation*}
1<\liminf q_r\leq \limsup q_r<\infty,
\end{equation*} 
then $ASC=ASC_\theta.$
\end{corollary}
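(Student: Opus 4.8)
The plan is to obtain the equality $ASC = ASC_\theta$ by establishing the two set-inclusions separately and then invoking the antisymmetry of set containment. The compound hypothesis $1 < \liminf q_r \leq \limsup q_r < \infty$ splits cleanly into two independent halves, each of which is precisely the hypothesis of one of the two inclusion theorems already proved.

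First I would extract from the chain of inequalities the standalone condition $\liminf q_r > 1$, which is exactly the hypothesis of Theorem \ref{lac1}; applying that theorem yields $ASC \subseteq ASC_\theta$. Next I would extract the standalone condition $\limsup q_r < \infty$, which is the hypothesis of Theorem \ref{lac2}, whose conclusion is $ASC_\theta \subseteq ASC$. Combining the two inclusions $ASC \subseteq ASC_\theta$ and $ASC_\theta \subseteq ASC$ forces $ASC = ASC_\theta$, the desired statement.

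Since both inclusions are quoted verbatim from the preceding theorems, there is no genuine obstacle to overcome. The only point that even warrants checking is that the single compound inequality really does deliver both $\liminf q_r > 1$ and $\limsup q_r < \infty$ as separate assumptions, and this is immediate from reading off the two ends of the chain $1 < \liminf q_r \leq \limsup q_r < \infty$. Thus the corollary is a direct consequence of the two inclusion results, requiring no new estimates.
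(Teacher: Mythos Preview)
Your proposal is correct and is precisely the argument the paper intends: the corollary is stated without a separate proof, simply as an immediate consequence of Theorems~\ref{lac1} and~\ref{lac2}, and you have made explicit exactly that deduction by splitting the compound hypothesis and combining the two inclusions.
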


In \cite{tajabipanlacunary}, Yaying and Hazarika introduced lacunary arithmetic convergent sequence space $AC_{\theta}$ as follows:\\
\begin{equation*}
AC_{\theta}=\left\{(x_m):\lim_{r\rightarrow \infty}\frac{1}{h_r}\sum_{m\in I_r}\left|x_m-x_{\left\langle m,n\right\rangle}\right|=0 ~\text{for an integer}~n \right\}.
\end{equation*} 
We give some relation between the spaces $AC_{\theta}$ and $ASC_{\theta}.$

\begin{thm}
Let $\theta=(k_r)$ be a lacunary sequence; then if $(x_m)\in AC_{\theta}$ then $(x_m)\in ASC_{\theta}.$
\end{thm}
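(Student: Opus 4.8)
The plan is to use the standard Markov/Chebyshev-type argument that converts an averaged ($L^1$-type) smallness condition into a statistical (density) smallness condition. The key observation is that membership in $AC_\theta$ supplies a \emph{single} integer $n$ for which the lacunary averages $\frac{1}{h_r}\sum_{m\in I_r}|x_m - x_{\langle m,n\rangle}|$ tend to $0$, and this very same $n$ can be used to verify the $ASC_\theta$ condition for every $\varepsilon>0$.

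First I would fix $\varepsilon>0$ and take the integer $n$ furnished by the hypothesis $(x_m)\in AC_\theta$. For each $r$, I would restrict the sum over $I_r$ to those indices $m$ at which the discrepancy is at least $\varepsilon$, discarding the remaining nonnegative terms; on each retained index the summand is bounded below by $\varepsilon$. This gives
\begin{equation*}
\sum_{m\in I_r}\left|x_m - x_{\langle m,n\rangle}\right| \geq \varepsilon\,\bigl|\{m\in I_r : \left|x_m - x_{\langle m,n\rangle}\right|\geq \varepsilon\}\bigr|.
\end{equation*}
Dividing through by $\varepsilon h_r$ yields
\begin{equation*}
\frac{1}{h_r}\bigl|\{m\in I_r : \left|x_m - x_{\langle m,n\rangle}\right|\geq \varepsilon\}\bigr| \leq \frac{1}{\varepsilon}\cdot\frac{1}{h_r}\sum_{m\in I_r}\left|x_m - x_{\langle m,n\rangle}\right|.
\end{equation*}
Then I would let $r\to\infty$: since the right-hand side tends to $0$ by the definition of $AC_\theta$ and $\varepsilon$ is a fixed positive constant, the lacunary density of the exceptional set tends to $0$. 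As $\varepsilon>0$ was arbitrary, this is exactly the assertion $(x_m)\in ASC_\theta$.

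I do not anticipate any genuine obstacle; the only point deserving a moment's care is the role of the quantifier on $n$. Because $AC_\theta$ guarantees one fixed $n$ that makes the averages vanish, there is no need to let $n$ depend on $\varepsilon$, so the $ASC_\theta$ condition, which permits $n$ to vary with $\varepsilon$, is satisfied with room to spare. The argument is simply the lacunary arithmetic analogue of the classical fact that strong Cesàro summability implies statistical convergence.
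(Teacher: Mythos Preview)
Your proposal is correct and follows essentially the same approach as the paper: both use the Markov/Chebyshev-type inequality
\[
\sum_{m\in I_r}\left|x_m-x_{\langle m,n\rangle}\right|\ \geq\ \varepsilon\,\left|\left\{m\in I_r:\left|x_m-x_{\langle m,n\rangle}\right|\geq\varepsilon\right\}\right|,
\]
then divide by $h_r$ and let $r\to\infty$. Your explicit remark on the quantifier for $n$ is a slight elaboration beyond the paper's terse version, but the argument is the same.
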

\begin{proof}
Let $(x_m)\in AC_{\theta}$ and $\varepsilon>0.$ We can write, for an integer $n$
\begin{align*}
&\sum_{m\in I_r}\left|x_m-x_{\left\langle m,n\right\rangle}\right|\\
& \geq\sum_{\underset {\left|x_m-x_{\left\langle m,n\right\rangle}\right| \geq \varepsilon}{m\in I_r}}\left|x_m-x_{\left\langle m,n\right\rangle}\right|+  \sum_{\underset {\left|x_m-x_{\left\langle m,n\right\rangle}\right| < \varepsilon}{m\in I_r}}\left|x_m-x_{\left\langle m,n\right\rangle}\right|\\
&\geq \sum_{\underset {\left|x_m-x_{\left\langle m,n\right\rangle}\right| \geq \varepsilon}{m\in I_r}}\left|x_m-x_{\left\langle m,n\right\rangle}\right|\\
&\geq \varepsilon \left|\left\{m\in I_r:\left|x_m-x_{\left\langle m,n\right\rangle}\right|\geq \varepsilon\right\}\right|
\end{align*}
which gives the result.
\end{proof}

\begin{corollary}
In view of \cite[Theorem 2.2]{tajabipanlacunary}, if $(x_m)\in AC$ then $(x_m)\in ASC_{\theta}.$ 
\end{corollary}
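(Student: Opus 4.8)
The plan is to obtain the inclusion $AC \subseteq ASC_{\theta}$ as a straightforward transitivity of two already-available inclusions, with no new estimate required. The immediately preceding theorem establishes that $AC_{\theta} \subseteq ASC_{\theta}$, i.e.\ every lacunary arithmetic convergent sequence is lacunary arithmetic statistically convergent. Hence it suffices to supply the missing link $AC \subseteq AC_{\theta}$, and this is exactly what the cited \cite[Theorem 2.2]{tajabipanlacunary} provides: an ordinary arithmetic convergent sequence is automatically lacunary arithmetic convergent. Chaining these two, $AC \subseteq AC_{\theta} \subseteq ASC_{\theta}$, yields the claim.

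Concretely, I would first invoke \cite[Theorem 2.2]{tajabipanlacunary} to assert that if $(x_m)\in AC$ then $(x_m)\in AC_{\theta}$; here one should make sure the hypotheses of that theorem (which may require a condition such as $\liminf q_r>1$, analogous to Theorem \ref{lac1} in this paper) are either met or explicitly carried over, since the statement as phrased simply quotes the result. Second, I would apply the preceding theorem of the present section, which shows $AC_{\theta}\subseteq ASC_{\theta}$ by the elementary bound $\varepsilon\,\bigl|\{m\in I_r:|x_m-x_{\langle m,n\rangle}|\geq\varepsilon\}\bigr|\leq \sum_{m\in I_r}|x_m-x_{\langle m,n\rangle}|$ after dividing by $h_r$ and letting $r\to\infty$. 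Combining the two memberships gives $(x_m)\in ASC_{\theta}$.

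Since the argument is purely a composition of inclusions already in hand, there is no genuine analytic obstacle. The only point requiring care is the interface with the external reference: one must verify that the version of \cite[Theorem 2.2]{tajabipanlacunary} being cited truly delivers $AC\subseteq AC_{\theta}$ under the ambient assumptions on $\theta$, rather than some weaker or differently-quantified statement. Once that citation is confirmed to yield membership in $AC_{\theta}$, the conclusion follows at once, so the proof reduces to a single sentence combining the two inclusions.
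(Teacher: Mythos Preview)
Your proposal is correct and matches the paper's own (implicit) argument: the corollary is stated without proof precisely because it follows by chaining the cited external result $AC\subseteq AC_{\theta}$ with the immediately preceding theorem $AC_{\theta}\subseteq ASC_{\theta}$. Your caution about possible extra hypotheses on $\theta$ in the cited Theorem 2.2 is sensible, though the paper itself does not record any such condition in the corollary.
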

\section{Lacunary Arithmetic Statistical Continuity}
In this section we shall introduce lacunary arithmetic statistical continuity and establish some interesting results.
\begin{defn}
A function $f$ defined on a subset $E$ of $\mathbb{R}$ is said to be lacunary arithmetic statistical continuous if it preserves lacunary arithmetic statistical convergence i.e. if $ASC_{\theta}-\lim x_m=x_{\left\langle m,n\right\rangle}$ implies $ASC_{\theta}-\lim f(x_m)=f(x_{\left\langle m,n\right\rangle}).$
\end{defn}
We shall write $ASC_{\theta}$ continuous function to denote lacunary arithmetic statistical continuous function.\\
It is easy to see that the sum and the difference of two $ASC_\theta$ continuous functions is $ASC_\theta$ continuous. Also the composition of two $ASC_\theta$ continuous functions is again $ASC_\theta$ continuous. In the classical case, it is known that the uniform limit of sequentially continuous function is sequentially continuous, now we see that the uniform limit of $ASC_\theta$ continuous functions is also $ASC_\theta$ continuous. 

\begin{thm}
Let $(f_m)_{m\in \mathbb{N}}$ be a sequence of $ASC_\theta$ continuous functions defined on a subset $E$ of $\mathbb{R}$ and $(f_m)$ be uniformly convergent to a function $f,$ then $f$ is $ASC_\theta$ continuous.  
\end{thm}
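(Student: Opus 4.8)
The plan is to run the classical $\varepsilon/3$ argument for uniform limits, but to phrase its conclusion in terms of the asymptotic density of the index sets appearing in the definition of $ASC_\theta$. Fix $\varepsilon>0$ and a sequence $x=(x_m)$ with all terms in $E$ satisfying $ASC_\theta-\lim x_m=x_{\langle m,n\rangle}$; the goal is to exhibit an integer $n$ witnessing $ASC_\theta-\lim f(x_m)=f(x_{\langle m,n\rangle})$. Observe first that every term of $x$, and in particular each $x_{\langle m,n\rangle}$, lies in $E$, so that $f$ and every $f_k$ are defined at all the points occurring below.

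First I would invoke uniform convergence to select a single index $k$ with $\sup_{t\in E}\left|f_k(t)-f(t)\right|<\varepsilon/3$. Next, since $f_k$ is $ASC_\theta$ continuous and $x\in ASC_\theta$, the image sequence satisfies $f_k(x)\in ASC_\theta$; reading this off at the level $\varepsilon/3$ produces an integer $n$ for which
\[
\lim_{r\to\infty}\frac{1}{h_r}\left|\left\{m\in I_r:\left|f_k(x_m)-f_k(x_{\langle m,n\rangle})\right|\geq \frac{\varepsilon}{3}\right\}\right|=0.
\]
I would then fix this particular $n$ for the remainder of the argument; crucially, it is chosen to witness the convergence of $f_k(x)$, not copied from the convergence of $x$, so that no matching of witnessing integers is required.

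The heart of the proof is the splitting
\[
\left|f(x_m)-f(x_{\langle m,n\rangle})\right|\leq \left|f(x_m)-f_k(x_m)\right|+\left|f_k(x_m)-f_k(x_{\langle m,n\rangle})\right|+\left|f_k(x_{\langle m,n\rangle})-f(x_{\langle m,n\rangle})\right|.
\]
Because both $x_m$ and $x_{\langle m,n\rangle}$ lie in $E$, the first and third summands are each smaller than $\varepsilon/3$ by the choice of $k$; hence whenever the middle term is below $\varepsilon/3$, the left-hand side is below $\varepsilon$. Taking contrapositives gives the inclusion
\[
\left\{m\in I_r:\left|f(x_m)-f(x_{\langle m,n\rangle})\right|\geq \varepsilon\right\}\subseteq\left\{m\in I_r:\left|f_k(x_m)-f_k(x_{\langle m,n\rangle})\right|\geq \frac{\varepsilon}{3}\right\},
\]
so that after dividing by $h_r$ the density on the left is dominated by the density on the right, which tends to $0$. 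As $\varepsilon>0$ was arbitrary, this shows $ASC_\theta-\lim f(x_m)=f(x_{\langle m,n\rangle})$, i.e.\ $f$ is $ASC_\theta$ continuous.

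The step I expect to carry the real weight is the selection of $k$: it is essential that the estimate $\left|f_k(t)-f(t)\right|<\varepsilon/3$ hold uniformly over $t\in E$, since the two points at which I apply it, namely $x_m$ and $x_{\langle m,n\rangle}$, both move with $m$ and may visit arbitrarily many points of $E$. A merely pointwise limit would bound these two error terms by quantities depending on $m$, which need not be uniformly small and so could destroy the density estimate; thus uniformity of convergence, rather than any delicate property of $ASC_\theta$, is exactly what the argument consumes.
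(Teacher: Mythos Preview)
Your proof is correct and follows essentially the same $\varepsilon/3$ strategy as the paper: pick one index $k$ (the paper calls it $N$) by uniform convergence, use $ASC_\theta$ continuity of $f_k$ for the middle term, and pass to an inclusion of index sets. The only cosmetic difference is that the paper writes the inclusion as a union of three sets (the two outer ones corresponding to $|f_k-f|\geq\varepsilon/3$, which are in fact empty by the strict uniform bound), whereas you immediately observe those two sets are empty and keep only the middle one; your version is tidier but the underlying argument is the same.
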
  
\begin{proof}
Let $\varepsilon>0$ and $(x_m)$ be any $ASC_\theta$ convergent sequence on a subset $E$ of $\mathbb{R}.$ By the uniform convergence of $f_m,$ there exist $N\in \mathbb{N}$ such that $\left|f_m(x)-f(x)\right|<\frac{\varepsilon}{3}$ for all $m\geq N$ and for all $x\in E.$\\
Since $f_N$ is $ASC_\theta$ continuous on $E,$ we have for an integer $n$
\begin{equation*}
\lim_{r\rightarrow \infty}\frac{1}{h_r}\left|\left\{m\in I_r:\left|f_N(x_m)-f_N(x_{\left\langle m,n\right\rangle})\right|\geq \frac{\varepsilon}{3}\right\}\right|=0.
\end{equation*}
On the other hand, for an integer $n$ we have
\begin{align*}
&\left\{m\in I_r:\left|f(x_m)-f(x_{\left\langle m,n\right\rangle})\right|\geq \frac{\varepsilon}{3}\right\}\\
&\subset \left\{m\in I_r:\left|f_N(x_{\left\langle m,n\right\rangle})-f(x_{\left\langle m,n\right\rangle})\right|\geq \frac{\varepsilon}{3}\right\}\\
&\cup \left\{m\in I_r:\left|f_N(x_{\left\langle m,n\right\rangle})-f_N(x_m)\right|\geq \frac{\varepsilon}{3}\right\}\\
&\cup  \left\{m\in I_r:\left|f_N(x_m)-f(x_m)\right|\geq \frac{\varepsilon}{3}\right\}. 
\end{align*}
Thus it follows from the above inclusion that
\begin{align*}
&\lim_{r\rightarrow \infty}\frac{1}{h_r}\left|\left\{m\in I_r:\left|f(x_m)-f(x_{\left\langle m,n\right\rangle})\right|\geq \varepsilon\right\}\right|\\
&\leq \lim_{r\rightarrow \infty}\frac{1}{h_r}\left|\left\{m\in I_r:\left|f_N(x_{\left\langle m,n\right\rangle})-f(x_{\left\langle m,n\right\rangle})\right|\geq \frac{\varepsilon}{3}\right\}\right| +\\
& \lim_{r\rightarrow \infty}\frac{1}{h_r}\left|\left\{m\in I_r:\left|f_N(x_{\left\langle m,n\right\rangle})-f_N(x_m)\right|\geq \frac{\varepsilon}{3}\right\}\right|+\\
& \lim_{r\rightarrow \infty}\frac{1}{h_r}\left|\left\{m\in I_r:\left|f_N(x_m)-f(x_m)\right|\geq \frac{\varepsilon}{3}\right\}\right|\\
&=0. 
\end{align*}
Thus $f$ is $ASC_{\theta}$ continuous. 
\end{proof}

\begin{thm}
The set of all $ASC_{\theta}$ continuous functions defined on a subset $E$ of $\mathbb{R}$ is a closed subset of all continuous function on $E,$ i.e. $\overline{ASC_{\theta}(E)}$=$ASC_{\theta}(E),$ where $ASC_{\theta}(E)$ denotes the set of all $ASC_{\theta}$ continuous functions defined on $E$ and $\overline{ASC_{\theta}(E)}$ denotes the closure of $ASC_{\theta}(E).$
\end{thm}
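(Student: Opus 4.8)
The plan is to reduce the statement directly to the preceding theorem, which asserts that the uniform limit of a sequence of $ASC_\theta$ continuous functions is itself $ASC_\theta$ continuous. Since closedness of a subset in a metric space is equivalent to stability under limits of convergent sequences drawn from that subset, the equality $\overline{ASC_\theta(E)}=ASC_\theta(E)$ will follow once the topology is fixed. I interpret the closure as taken inside the space of continuous functions on $E$ equipped with the metric of uniform convergence (the sup-metric), since that is precisely the mode of convergence treated in the previous theorem.

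First I would record the trivial inclusion $ASC_\theta(E)\subseteq \overline{ASC_\theta(E)}$, valid for any set and its closure. The entire content of the theorem therefore lies in proving the reverse inclusion $\overline{ASC_\theta(E)}\subseteq ASC_\theta(E)$, i.e. that the closure introduces no new functions.

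To establish this, I would take an arbitrary $f\in \overline{ASC_\theta(E)}$. Because the ambient space carries a metric, closure points are exactly limits of sequences from the set, so there exists a sequence $(f_m)_{m\in\mathbb{N}}$ of functions in $ASC_\theta(E)$ converging uniformly to $f$ on $E$. Each $f_m$ is $ASC_\theta$ continuous by hypothesis, and the convergence is uniform; hence the previous theorem applies verbatim and yields that the limit $f$ is $ASC_\theta$ continuous. Consequently $f\in ASC_\theta(E)$, which gives the reverse inclusion and thus the desired equality $\overline{ASC_\theta(E)}=ASC_\theta(E)$.

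The argument is deliberately short because the substantive work has already been carried out in the preceding theorem. The only point that genuinely requires care, and which I regard as the main (if minor) obstacle, is justifying the sequential characterization of the closure: one must be sure the ambient topology is metrizable so that membership in $\overline{ASC_\theta(E)}$ actually supplies an approximating sequence $(f_m)$. I would therefore note explicitly that the sup-metric on the space of continuous functions on $E$ provides exactly such a metric structure, so the sequence is available and the reduction is legitimate.
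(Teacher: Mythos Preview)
Your proposal is correct and follows essentially the same approach as the paper: take $f\in\overline{ASC_\theta(E)}$, extract an approximating sequence $(f_m)$ in $ASC_\theta(E)$ converging uniformly to $f$, and conclude that $f$ is $ASC_\theta$ continuous. The only difference is that you invoke the preceding theorem directly, whereas the paper reproduces the $\varepsilon/3$ argument of that theorem inline; your version is simply more economical.
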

\begin{proof}
Let $f$ be any element of $\overline{ASC_{\theta}(E)}.$ Then there exists a sequence of points in $ASC_{\theta}(E)$ such that $\lim f_m=f.$ Now let $(x_m)$ be any $ASC_\theta$ convergent sequence in $E.$ Since $(f_m)$ converges to $f,$ there exists a positive integer $N$ such that
\begin{equation*}
\left|f(x)-f_m(x)\right|< \frac{\varepsilon}{3},~\forall m\geq N \text{~and~}\forall x\in E. 
\end{equation*}
Now $f_N$ is $ASC_\theta$ continuous on $E$, so we have for an integer $n$
\begin{equation*}
\lim_{r\rightarrow \infty}\frac{1}{h_r}\left|\left\{m\in I_r:\left|f_N(x_m)-f_N(x_{\left\langle m,n\right\rangle})\right|\geq \frac{\varepsilon}{3}\right\}\right|=0.
\end{equation*}
On the other hand, for an integer $n$ we have
\begin{align*}
&\left\{m\in I_r:\left|f(x_m)-f(x_{\left\langle m,n\right\rangle})\right|\geq \frac{\varepsilon}{3}\right\}\\
&\subset \left\{m\in I_r:\left|f_N(x_{\left\langle m,n\right\rangle})-f(x_{\left\langle m,n\right\rangle})\right|\geq \frac{\varepsilon}{3}\right\}\\
&\cup \left\{m\in I_r:\left|f_N(x_{\left\langle m,n\right\rangle})-f_N(x_m)\right|\geq \frac{\varepsilon}{3}\right\}\\
&\cup  \left\{m\in I_r:\left|f_N(x_m)-f(x_m)\right|\geq \frac{\varepsilon}{3}\right\}. 
\end{align*}
From the above inclusion we can write
\begin{align*}
&\lim_{r\rightarrow \infty}\frac{1}{h_r}\left|\left\{m\in I_r:\left|f(x_m)-f(x_{\left\langle m,n\right\rangle})\right|\geq \varepsilon\right\}\right|\\
&\leq \lim_{r\rightarrow \infty}\frac{1}{h_r}\left|\left\{m\in I_r:\left|f_N(x_{\left\langle m,n\right\rangle})-f(x_{\left\langle m,n\right\rangle})\right|\geq \frac{\varepsilon}{3}\right\}\right| +\\
& \lim_{r\rightarrow \infty}\frac{1}{h_r}\left|\left\{m\in I_r:\left|f_N(x_{\left\langle m,n\right\rangle})-f_N(x_m)\right|\geq \frac{\varepsilon}{3}\right\}\right|+\\
 &\lim_{r\rightarrow \infty}\frac{1}{h_r}\left|\left\{m\in I_r:\left|f_N(x_m)-f(x_m)\right|\geq \frac{\varepsilon}{3}\right\}\right|\\
&=0. 
\end{align*}
Thus $f$ is $ASC_{\theta}$ continuous. So $f\in ASC_{\theta}(E)$ which gives us our required result.  
\end{proof}

\end{document}